
\documentclass[preprint,12pt]{elsarticle}




\usepackage{amssymb}

\usepackage[noend]{algpseudocode}
\usepackage{algorithmicx,algorithm}

\newtheorem{Theorem}{\quad Theorem}[section]

\newtheorem{Corollary}[Theorem]{\quad Corollary}
\newtheorem{Lemma}[Theorem]{\quad Lemma}

\newenvironment{proof}{\noindent{\bf Proof.}\ \ }{$\Box$\medskip}

\journal{Discrete Applied Mathematics}

\begin{document}

\begin{frontmatter}



\title{Linear algorithms on Steiner domination of trees\tnoteref{label1}}
\tnotetext[label1]{The research is
supported by Natural Science Foundation of China (No.61173002,11701543). }

\author[add1]{Yueming Shen}
\author[add1,add2]{Chengye Zhao}
\ead{cyzhao@cjlu.edu.cn}
\author[add2]{Chenglin Gao}
\author[add2]{Yunfang Tang}

\address[add1]{Science of Economics and Management,China Jiliang University,Hangzhou,China}
\address[add2]{Science of College,China Jiliang University,Hangzhou,China}

\begin{abstract}
A set of vertices $W$ in a connected graph $G$ is called a Steiner dominating set if $W$ is both Steiner and dominating set. The Steiner domination number $\gamma_{st}(G)$ is the minimum cardinality of a Steiner dominating set of $G$. A linear algorithm is proposed in this paper for finding a minimum Steiner dominating set for a tree $T$.
\end{abstract}

\begin{keyword}
linear algorithm \sep Steiner dominating set \sep Steiner domination number


\end{keyword}

\end{frontmatter}


\section{Introduction}

In this paper, we only consider finite, connected and undirected graph $G$. We refer to the books \cite{West2001,HHS1998} for notation and terminology on graph theory and theory of domination.

Let $G = (V(G), E(G))$ be a graph with the order of vertex set $|V(G)|$ and the order of edge set $ |E(G)|$. The open neighborhood and the closed neighborhood of a vertex $v \in V$ are denoted by $N(v) = \{ {u \in V(G):vu \in E(G)} \}$ and $N[v] = N(v) \cup \{v\} $, respectively. For a vertex set $S \in V(G)$, $N(S) = \bigcup\limits_{v \in S} {N(v)} $, and $N[S] = \bigcup\limits_{v \in S} {N[V]} $. The distance $d(u,v)$ between two vertices $u$ and $v$ of a connected graph $G$ is the length of shortest $u-v$ path in $G$. For a non-empty set $W$ of vertices in connected graph $G$, the Steiner distance $d(W)$ of $W$ is the minimum size of a connected subgraph of $G$ containing $W$. Obviously, each such subgraph is a tree and is called a Steiner tree or a Steiner $W$-tree. The set of all vertices of $G$ that lie on some Steiner $W$-tree is denoted by $S(W)$. If $S(W) = V(G)$ then $W$ is called Steiner set of $G$. The Steiner number $s(G)$ is the minimum cardinality of a Steiner set.

Chartrand and Zhang introduced the concept of Steiner number of a connected graph $G$ in \cite{CZ2002}. Pelayo corrected main result in \cite{Pelayo2004}. He proved that not all Steiner sets are geodetic sets and there are connected graphs whose Steiner number is strictly lower than their geodetic number. Hernando et al. \cite{HJMPS2005} have studied the relationships between Steiner sets and geodetic sets and between Steiner sets and monophonic sets. Many results on Steiner distance were given in \cite{COTZ1989,SJ2011}.

A subset $S$ of V(G) is called dominating set if every vertex $v \in V$ is either a vertex of $S$ or is adjacent to a vertex of $S$. The domination number $\gamma(G)$ is the minimum cardinality of minimal dominating set of $G$. A systematic visit of each vertex of a tree is called a tree traversal. A set of vertices $W$ in a connected graph $G$ is called a Steiner dominating set if $W$ is both Steiner and dominating set. The Steiner domination number $\gamma_{st}(G)$ is the minimum cardinality of a Steiner dominating set of $G$.

The concept of Steiner domination was introduced in \cite{JEA2013}, and Vaidya etc. have obtained various results on Steiner domination numbers in \cite{VK20151,VK20152,VK2018}.

The most algorithmic complexity of domination and related parameters of graphs are NP-complete or NP-hard problems. But there are many linear algorithms for domination and related parameters in trees, such as domination, total domination and secure domination in trees \cite{CGH1975,LPHH1984,BVV2014}. In this paper, we present a linear algorithm of Steiner domination in trees. It is similar to an algorithm due to Mitchell, Cockayne and Hedetniemi \cite{MCH1979} for computing the domination number of an arbitrary tree.



\section{Lemmas}

 A vertex of a graph $G$ is called a leaf or end-vertex if it is adjacent to only one vertex in $G$. A vertex $v$ is an extreme vertex if the subgraph induced by its neighbors is complete. Thus, every end-vertex is an extreme vertex.

\begin{Lemma}\label{Lem1}\cite{CZ2002} Each extreme vertex of a graph G belongs to every Steiner set of G. In particular, each end-vertex of G belongs to every Steiner set of G. \end{Lemma}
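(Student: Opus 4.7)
The plan is to proceed by contradiction: assume $v$ is an extreme vertex of $G$ that does not belong to some Steiner set $W$, and derive a contradiction with the minimality built into the definition of a Steiner $W$-tree.

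First I would exploit the fact that $W$ is a Steiner set, so $S(W) = V(G)$; in particular $v \in S(W)$, meaning there exists a Steiner $W$-tree $T$ with $v \in V(T)$. Because $v \notin W$, the vertex $v$ cannot be a leaf of $T$: otherwise $T - v$ would be a smaller connected subgraph of $G$ still containing $W$, contradicting that $T$ realizes the Steiner distance $d(W)$. Hence $v$ has at least two neighbors $u_1, u_2, \ldots, u_k$ in $T$, with $k \geq 2$.

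Next I would use the extreme property of $v$: since $u_1, \ldots, u_k \in N(v)$ and $N(v)$ induces a complete subgraph of $G$, the vertices $u_1, \ldots, u_k$ are pairwise adjacent in $G$. Removing $v$ from $T$ disconnects $T$ into exactly $k$ components (because $T$ is a tree), with $u_i$ lying in the $i$-th component. I can then reconnect these components using $k-1$ of the edges $u_iu_j$, which all exist in $G$. The resulting subgraph $T'$ is connected, contains every vertex of $W$ (since $v \notin W$), and has $|E(T)| - k + (k-1) = |E(T)| - 1$ edges. This contradicts the minimality of $T$ as a Steiner $W$-tree, so $v$ must belong to $W$.

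Finally, the "in particular" clause is immediate: an end-vertex has exactly one neighbor, so $N(v)$ trivially induces a complete (single-vertex) subgraph, making every end-vertex extreme. The main delicate point is the edge-counting step verifying that $T'$ is strictly smaller than $T$; I would state this carefully to ensure the argument works even when $k = 2$ (where we remove two edges and add one).
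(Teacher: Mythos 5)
Your argument is correct. Note that the paper itself gives no proof of this lemma---it is quoted from the cited reference \cite{CZ2002}---and your proof is essentially the standard one given there: locate the extreme vertex $v$ on a Steiner $W$-tree, use the completeness of the neighbourhood to bypass $v$ by reconnecting the components of $T-v$ with $k-1$ edges among its tree-neighbours, and contradict the minimality of $d(W)$; the edge count $|E(T)|-k+(k-1)=|E(T)|-1$ is handled correctly, including the leaf case.
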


The following corollary is an immediate consequence of Lemma \ref{Lem1}.

\begin{Corollary}\label{Cor1}\cite{CZ2002} Every nontrivial tree with exactly k end-vertices has Steiner number k. \end{Corollary}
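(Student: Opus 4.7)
The plan is to show matching lower and upper bounds for $s(T)$ when $T$ is a nontrivial tree with exactly $k$ end-vertices. Let $L$ denote the set of end-vertices, so $|L|=k$.

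The lower bound $s(T)\ge k$ is immediate from Lemma \ref{Lem1}: every end-vertex must belong to every Steiner set, so any Steiner set contains $L$ and therefore has cardinality at least $k$.

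For the upper bound $s(T)\le k$, I would verify that $L$ itself is a Steiner set, i.e.\ that $S(L)=V(T)$. To this end I first argue that the unique minimum-size connected subgraph of $T$ containing $L$ is $T$ itself. Pick any internal vertex $v$ of $T$; I must show $v$ lies in every connected subgraph of $T$ that contains $L$. Consider the components $T_1,\dots,T_d$ of $T-v$ (with $d=\deg_T(v)\ge 2$). Each $T_i$ is a subtree, hence either consists of a single vertex (which is forced to be an end-vertex of $T$, since its only $T$-neighbor was $v$), or has at least two leaves as a tree, at most one of which is the neighbor of $v$ inside $T_i$, so at least one is also an end-vertex of $T$. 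Hence every component of $T-v$ contains a vertex of $L$, which means no connected subgraph of $T$ that omits $v$ can contain all of $L$. Consequently every Steiner $L$-tree contains $v$, and since $T$ itself is such a connected subgraph, $T$ is the (unique) Steiner $L$-tree. Therefore $S(L)=V(T)$, so $L$ is a Steiner set and $s(T)\le k$.

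Combining the two bounds yields $s(T)=k$. The only non-routine step is the structural observation that in a tree every internal vertex separates the leaves into nonempty groups; once that is in hand the bound drops out immediately from Lemma \ref{Lem1}.
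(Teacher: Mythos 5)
Your proof is correct and follows essentially the route the paper intends: the lower bound $s(T)\ge k$ comes straight from Lemma \ref{Lem1}, and the upper bound from the observation that the leaf set $L(T)$ is itself a Steiner set (the content of Corollary \ref{Cor2}), since every internal vertex of a tree separates the leaves and hence lies in any connected subgraph containing them. The paper cites this result from \cite{CZ2002} and treats it as immediate; your write-up simply supplies the standard separation argument, and it is sound, including the single-vertex-component case.
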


By Corollary \ref{Cor1} and Lemma \ref{Lem1}, we have

\begin{Corollary}\label{Cor2} Let L(T) include all end-vertices of a tree T, then L(T) is a Steiner set of T. \end{Corollary}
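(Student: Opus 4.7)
The plan is to derive the corollary directly from Lemma \ref{Lem1} and Corollary \ref{Cor1} by a simple cardinality argument, without analyzing Steiner trees in $T$ from scratch.

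First I would let $k$ denote the number of end-vertices of $T$, so $|L(T)| = k$. By Corollary \ref{Cor1}, the Steiner number satisfies $s(T) = k$, which means there exists at least one Steiner set $W$ of $T$ with $|W| = k$. This existence step is the only piece of structural information I need about $T$; everything else will be a containment argument.

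Next I would invoke Lemma \ref{Lem1}, which asserts that every end-vertex of $T$ belongs to every Steiner set of $T$. Applied to the particular minimum Steiner set $W$ above, this yields $L(T) \subseteq W$. Combined with $|L(T)| = k = |W|$, equality of the two sets follows, so $W = L(T)$. Hence $L(T)$ is itself a Steiner set of $T$, which is exactly the claim.

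There is no real obstacle here; the only subtle point is recognizing that the two facts fit together precisely because the lower bound coming from Lemma \ref{Lem1} (every Steiner set has at least $k$ elements, namely the end-vertices) matches the upper bound given by Corollary \ref{Cor1} ($s(T) \le k$), forcing any minimum Steiner set to coincide with $L(T)$. I would keep the write-up to two or three lines, since no case analysis or construction of explicit Steiner $L(T)$-trees is needed.
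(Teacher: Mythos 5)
Your proposal is correct and takes essentially the same route as the paper, which justifies the corollary simply ``by Corollary~\ref{Cor1} and Lemma~\ref{Lem1}''; your write-up just makes explicit the cardinality argument (a minimum Steiner set has $|L(T)|$ vertices and must contain $L(T)$, hence equals it) that the paper leaves implicit.
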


Let $H = T[V-N[L(T)]]$ be the induced subgraph of $T$ from the set $V-N[L(T)]$. We have,

\begin{Theorem}\label{The1} For any nontrivial tree T, $\gamma_{st}(T) = |L(T)| + \gamma(H)$. \end{Theorem}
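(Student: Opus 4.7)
The plan is to prove the equality by matching upper and lower bounds, leaning on the two structural facts established already: by Lemma \ref{Lem1} every Steiner set (hence every Steiner dominating set) must contain all end-vertices, while by Corollary \ref{Cor2} the set $L(T)$ is itself a Steiner set of $T$.

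For the upper bound $\gamma_{st}(T) \le |L(T)| + \gamma(H)$, I would take a minimum dominating set $D$ of $H$ and put $W = L(T) \cup D$. This is a disjoint union since $V(H) \cap L(T) = \emptyset$, so $|W| = |L(T)| + \gamma(H)$. Two properties then need checking. First, $W$ is a Steiner set: Corollary \ref{Cor2} gives $S(L(T)) = V(T)$, and in a tree the minimal subtree containing a set of terminals can only grow when more terminals are added, so $S(W) \supseteq S(L(T)) = V(T)$. Second, $W$ dominates $T$: every vertex of $N[L(T)]$ is dominated by some leaf of $L(T) \subseteq W$, and every vertex of $V(H) = V(T) \setminus N[L(T)]$ is dominated by $D$ inside $H$, hence also in $T$.

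For the lower bound $\gamma_{st}(T) \ge |L(T)| + \gamma(H)$, I would start with an arbitrary Steiner dominating set $W$ and apply Lemma \ref{Lem1} to get $L(T) \subseteq W$. Setting $D = W \setminus L(T)$, observe that no leaf can dominate any vertex of $V(H)$ (by definition of $H$), so every vertex of $V(H)$ must be dominated by some element of $D$. The remaining task is to show $|D| \ge \gamma(H)$, which I would do by extracting from $D$ a dominating set $D'$ of $H$ with $|D'| \le |D|$: vertices of $D$ already in $V(H)$ are kept, while any vertex of $D$ in $N(L(T))$ is replaced by a suitable neighbor in $V(H)$.

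The main obstacle is precisely this replacement step. A vertex $d \in D \cap N(L(T))$ is a support vertex that may be adjacent to several vertices of $V(H)$, and swapping it for a single $V(H)$-neighbor can leave the other $V(H)$-neighbors of $d$ undominated. The delicate part of the proof is therefore to justify that such an exchange can always be carried out without inflating the set, presumably by rooting $T$ and processing support vertices from the deepest level upward, or by first selecting a minimum Steiner dominating set $W$ with some additional extremal property (for instance, maximizing $|W \cap V(H)|$) that rules out the problematic support-vertex configurations.
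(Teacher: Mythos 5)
Your upper bound is sound (and more careful than it strictly needs to be for a tree, since you correctly note why a superset of $L(T)$ remains a Steiner set), and your diagnosis of where the difficulty sits is exactly right. But the gap you flag cannot be closed: the exchange step fails not for lack of a clever rooting or extremal choice of $W$, but because the inequality $|D|\ge\gamma(H)$ --- and with it the stated equality --- is false. Take the tree $T$ on the vertices $\ell,s,h_1,h_2,x_1,x_2,z_1,z_2$ with edges $s\ell$, $sh_1$, $sh_2$, $h_1x_1$, $h_2x_2$, $x_1z_1$, $x_2z_2$ (a spider with centre $s$, one leg of length $1$ and two legs of length $3$). Here $L(T)=\{\ell,z_1,z_2\}$ and $N[L(T)]=\{\ell,s,x_1,x_2,z_1,z_2\}$, so $H$ consists of the two isolated vertices $h_1,h_2$ and $\gamma(H)=2$; the formula would give $\gamma_{st}(T)=5$. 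However $W=\{\ell,z_1,z_2,s\}$ contains $L(T)$, hence is a Steiner set of $T$ (as you argued, in a tree the unique Steiner tree of any superset of $L(T)$ is all of $T$), and it dominates $T$: the vertex $s$ dominates $h_1$ and $h_2$, while $z_i$ dominates $x_i$. Thus $\gamma_{st}(T)\le 4<5$. This is precisely the configuration you worried about --- a vertex of $N(L(T))$ adjacent to two vertices of $V(H)$ does the work of two vertices of any dominating set of $H$, and no replacement argument can undo that.

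For comparison, the paper's own proof of the lower bound simply asserts that $S''=S'-L(T)$ is a dominating set of $H$; that is exactly the step you declined to take on faith, and it is where the proof (and the theorem) breaks down, since $S''$ may contain vertices of $N(L(T))$ which dominate vertices of $V(H)$ through edges of $T$ that are not present in $H$. What your two bounds actually establish is the identity $\gamma_{st}(T)=|L(T)|+\rho(T)$, where $\rho(T)$ is the minimum cardinality of a set $D\subseteq V(T)$ with $V(H)\subseteq N_T[D]$, i.e.\ domination of $V(H)$ carried out in $T$ rather than in $H$. One always has $\rho(T)\le\gamma(H)$, but the example shows the inequality can be strict, so Theorem \ref{The1} as stated requires this correction (and the subsequent algorithm, which computes $|L(T)|+\gamma(H)$, inherits the problem). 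In short: your proof attempt is honest about its missing step, but that step is not merely delicate --- it is impossible, because the statement itself is false.
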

\begin{proof} Let $S$ be a minimum dominating set of $H$ and $\gamma(H)=|S|$. By Corollary \ref{Cor2}, L(T) is  a Steiner set of T. Hence the set $S \cup L(T)$ is a Steiner dominating set of T and $\gamma_{st}(T) \le |L(T)| + \gamma(H)$.

Nextly, we prove $\gamma_{st}(T) \ge |L(T)| + \gamma(H)$. By contradiction, let $\gamma_{st}(T) < |L(T)| + \gamma(H)$ and there is a $\gamma_{st}$-set $S'$ such that $\gamma_{st}(T)=|S'|$. By Lemma \ref{Lem1}, L(T) is a subset of each minimum Steiner set of $T$. Let $S''=S'-L(T)$. By the definition of $H$, $S''$ is  a minimum dominating set of $H$ such $|S''| = \gamma_{st}(T) - |L(T)| < \gamma(H)$, it is a contradiction.
\end{proof}
\section{Linear algorithm for Forest Domination }

In this section, we construct a linear algorithms for domination in forest. The algorithms is based on the algorithm for computing the domination number of an arbitrary tree by Mitchell, Cockayne and Hedetniemi \cite{MCH1979}.

By Theorem \ref{The1}, the minimum Steinier dominating set of a tree is divided two subsets: $L(T)$ and the $\gamma$-set of subgraph $H$ of $T$.

By the definition of $H$, $H$ is a tree or a forest. So the algorithm in \cite{MCH1979} has to be changed for computing the domination number of a forest.
Algorithm 1 for domination of a forest $F$, and each tree $T$ in $F$ is rooted. Two linear arrays are maintained during this traversal process:

Parent[$i$]:contains the index of the parent of vertex $i$ in a forest $F$; in the Parent array, that the Parent of a vertex labelled $i$ is given by Parent[$i$], and Parent[$j$]=0 if vertex $j$ is the root of a tree in $F$; for any vertex labelled $i$ in $F$, Parent[$i$]$<i$.

Label[$i$]:contains three states:'Bound','Required' and 'Free';  the usage of Label array is similar to the algorithm in \cite{MCH1979}.

Compared with the algorithm in \cite{MCH1979}, we add the condition that Parent[$i$] $\neq$ 0. This condition ensures  that we construct the dominating set of each tree in $F$ by Algorithm 1 and get the minimum dominating set of a forest $F$.
\begin{algorithm}
\caption{Forest Domination} 
\hspace*{0.02in} {\bf Input:} 
input parameters a forest $F$ represented by an array Parent[1..$N$]\\
\hspace*{0.02in} {\bf Output:} 
output a minimum dominating set $D$ of $F$
\begin{algorithmic}[1]
\State $D \leftarrow \varnothing$ 
\For{$i$=1 to $N$} 
　　\State Label[$i$]='Bound'
\EndFor
\For{$i$=$N$ to 1 by -1}
　　\If{Label[$i$]='Bound' and Parent[$i$]$\neq$ 0} 
　　　　\State Label[Parent[$i$]]='Required'
　　\Else \If{Label[$i$]='Required'}
　　　　      \State  $D \leftarrow D \cup \{i\}$
              \If{Label[Parent[$i$]]='Bound'}
                  \State Label[Parent[$i$]]='Free'
              \EndIf
         \EndIf
　　\EndIf
\EndFor
\For{$i$=1 to $N$}
    \If{Parent[$i$]=0 and (Label[$i$]='Bound' or Label[$i$]='Required')}
        \State $D \leftarrow D \cup \{i\}$
    \EndIf
\EndFor
\end{algorithmic}
\end{algorithm}

\begin{Theorem}\label{The2} (Complexity of Algorithm 1). If the input forest to Algorithm 1 has order n, then both the space complexity and the
worst-case time complexity of Algorithm 1 are O(n).\end{Theorem}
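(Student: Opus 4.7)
The plan is to analyze the space usage and the running time of Algorithm~1 separately, treating each of the three sequential \texttt{for} loops as an independent contributor to the total cost, and arguing that each iteration performs only a constant amount of work.

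For the space bound, I would enumerate the data structures that persist during execution: the input array \texttt{Parent[1..$N$]}, the auxiliary array \texttt{Label[1..$N$]}, and the output set $D$. Each of the two arrays has length $N=n$ and stores a bounded-size entry (an integer index for \texttt{Parent}, or one of three symbolic tags for \texttt{Label}), so each contributes $O(n)$. The set $D$ is a subset of $\{1,\dots,n\}$ and can be stored, for instance, as a linked list with at most $n$ nodes, again $O(n)$. Summing these three $O(n)$ contributions gives the claimed space bound.

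For the time bound, I would walk through the three loops in order. The initialization loop on lines 2--4 performs $n$ constant-time assignments, giving $O(n)$. The main loop on lines 5--14 executes $n$ iterations; inside one iteration the algorithm reads \texttt{Label[$i$]} and \texttt{Parent[$i$]}, possibly reads and writes \texttt{Label[Parent[$i$]]}, and possibly inserts $i$ into $D$. Assuming the insertion is done by prepending to a linked list representation of $D$, each of these operations is $O(1)$, so the whole loop is $O(n)$. The final loop on lines 15--18 again performs $n$ iterations of constant work. Adding the three $O(n)$ contributions yields the claimed $O(n)$ worst-case time bound.

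The only subtle point, and thus the main thing I would take care to justify explicitly, is that every array access and every update to $D$ inside the main loop is genuinely constant time. In particular, I would note that the access pattern \texttt{Label[Parent[$i$]]} is a single indirection through an array, not a search, and that $D$ must be realized with a data structure supporting $O(1)$ insertion (a linked list or a dynamic array with amortized $O(1)$ append); under any such standard representation the per-iteration cost is $\Theta(1)$, and the two bounds follow immediately. No nontrivial combinatorial argument is required, since the correctness of Algorithm~1 (already implicit in the preceding discussion and in the reference to \cite{MCH1979}) is not part of what must be shown here.
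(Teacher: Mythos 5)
Your proposal is correct and follows essentially the same route as the paper: the paper likewise tallies the three \texttt{for} loops at constant work per iteration (obtaining $3n+1=O(n)$ time) and counts the memory for \texttt{Parent}, \texttt{Label} and $D$ plus the loop variables (obtaining $3n+2=O(n)$ space). Your extra remark about realizing $D$ with a structure supporting $O(1)$ insertion is a reasonable refinement of the same argument, not a different approach.
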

\begin{proof} Setp 1 can be performed in O(1) time. Steps 2-3, 4-11, 12-14 are three for-loops, and each operation in these loops can be performed in O(1) time. So the total operation time is $3n+1 = O(n)$.

A total of $3n$ memory units are required to store the array Label,Parent and the set $D$. Two memory units are
required to store the values of the variables $i$ and $N$. The space complexity of Algorithm 1 is therefore $3n + 2 =
O(n)$. \end{proof}

\section{Linear algorithm for Tree Steiner Domination }

In this section, we construct a linear algorithms for Steiner domination in a tree. By Theorem \ref{The1}, the definition of $H$ and Algorithm 1, we only consider the structures of $L(T)$ and $H$. Five linear arrays are maintained during this traversal process:

Parent[$i$]:contains the index of the parent of vertex $i$ in tree $T$; in the Parent array, that the Parent of a vertex labelled $i$ is given by Parent[$i$], and Parent[$i$]=0 if vertex $i$ is the root of $T$; for any vertex labelled $i$ in $T$, Parent[$i$]$<i$.

Flag[$i$]:Flag[$i$]=0 if the vertex $i$ is a end-vertex of $T$, else Flag[$i$]=1.

PFlag[$i$]:PFlag[$i$]=1 if the vertex $i$ is adjacent to a end-vertex of $T$, else PFlag[$i$]=0.

Index[$i$]:contains the index in $T$ of the vertex $i$ in $H$.

NParent[$i$]:contains the index of the parent of vertex $i$ in a forest $H$; in the Parent array, that the Parent of a vertex labelled $i$ is given by Parent[$i$], and Parent[$j$]=0 if vertex $j$ is the root of a tree in $H$; for any vertex labelled $i$ in $H$, Parent[$i$]$<i$.

By the steps 1-23 in Algorithm 2, we get $L(T)$ (the end-vertex set of $T$) and NParent array of $H=G[V-N[L(T)]]$. We obtain the $\gamma$-set of $H$ by the step 24 in Algorithm 2 (Nparent array as a input of Algorithm 1). Finally, we have a minimum Steiner dominating set of tree $T$ by the step 25 in Algorithm 2.

We conclude this section with a result on the space and time complexities of Algorithm 2.

\begin{algorithm}
\caption{Tree Steiner Domination} 
\hspace*{0.02in} {\bf Input:} 
input parameters a tree $T$ represented by an array Parent[1..$N$]\\
\hspace*{0.02in} {\bf Output:} 
output a minimum Steiner dominating set $SD$ of $T$
\begin{algorithmic}[1]
\State $SD \leftarrow \varnothing$ 
\For{$i$=1 to $N$} 
　　\State Flag[$i$]=0
    \State PFlag[$i$]=0
\EndFor
\For{$i$=1 to $N$} 
    \If{Parent[$i$] $\neq$ 0}
        \State Flag[Parent[$i$]]=1
    \EndIf
\EndFor
\For{$i$=1 to $N$} 
    \If{Flag[$i$] = 0}
        \State PFlag[Parent[$i$]]=1
    \EndIf
\EndFor
\For{$i$=1 to $N$} 
    \State $m=0$
    \If{Flag[$i$] = 0}
        \State $SD \leftarrow SD \cup \{i\}$
    \Else
        \If{PFlag[$i$] $\neq$ 1}
            \State $m=m+1$
            \State Index[$m$]=$i$
        \EndIf
    \EndIf
\EndFor
\For{$i$=1 to $m$} 
    \If{PFlag[Parent[Index[$i$]]] = 0}
        \State NParent[$i$]=Parent[Index[$i$]]
    \Else
        \State NParent[$i$]=0
    \EndIf
\EndFor
\State Input NParent as Parent into Algorithm 1, and get the result $D$
\State $SD \leftarrow SD \cup D$
\end{algorithmic}
\end{algorithm}

\begin{Theorem}\label{The3} (Complexity of Algorithm 2). If the input tree to Algorithm 2 has order n, then both the space complexity and the
worst-case time complexity of Algorithm 2 are O(n).\end{Theorem}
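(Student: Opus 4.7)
The plan is straightforward: walk through each phase of Algorithm 2 in execution order, bound its contribution to time and space individually, and sum the bounds. Because Theorem \ref{The2} already gives an O(n) bound for the invocation of Algorithm 1, the only real work is auditing the remaining loops of Algorithm 2.

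For the time complexity, I would first note that line 1 costs O(1). The four successive for-loops in lines 2--24 each iterate from $1$ to $N$ and perform only constant-time work per iteration (array reads, writes, comparisons, and a single insertion into $SD$ when a leaf is detected), so each loop contributes O(n). These loops also set the counter $m$, which is the number of vertices of $H$; since $m$ is incremented at most once per iteration of the loop in lines 16--24, we have $m \le n$. Consequently the loop over $1..m$ in lines 25--30 is also O(n). Line 31 invokes Algorithm 1 on a forest of $m \le n$ vertices, which by Theorem \ref{The2} costs O(m) = O(n), and the final union $SD \leftarrow SD \cup D$ in line 32 can be performed in $O(|D|) = O(n)$. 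Summing the contributions yields a total worst-case running time of O(n).

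For the space complexity, I would observe that each of the five arrays Parent, Flag, PFlag, Index, and NParent has length at most $n$, and the output set $SD$ together with the set $D$ returned by Algorithm 1 also occupy at most O(n) memory, while only a constant number of scalar variables ($i$, $N$, $m$) are used. By Theorem \ref{The2}, Algorithm 1 itself uses only O(m) = O(n) additional memory during its execution. Adding these contributions gives O(n) total space.

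I do not foresee a genuine obstacle: the whole argument is a pass-counting exercise. The single point that requires any attention is the inequality $m \le n$, which is needed to transfer Theorem \ref{The2}'s bound from the forest $H$ back to the original tree $T$; this is immediate from the construction of $m$ in lines 16--24. Everything else follows by summation of the per-loop O(n) bounds.
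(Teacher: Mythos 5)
Your proposal is correct and follows essentially the same route as the paper: a loop-by-loop accounting of constant-time operations plus an appeal to Theorem \ref{The2} for the embedded call to Algorithm 1, and a tally of the arrays and scalar variables for the space bound. Your explicit observation that $m \le n$ is a small but welcome clarification that the paper leaves implicit.
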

\begin{proof} Setps 1 and 25 can be performed in O(1) time. Steps 2-4, 5-7, 8-10, 11-18, 19-23 are five for-loops, and each operation in these loops can be performed in O(1) time. So the total operation time of these loops is $4n+m$. The operation time in step 24 is $O(m)$ by Theorem \ref{The2}. So the total operation time is $4n+m+2+O(m) = O(n)$.

A total of $8n$ memory units are required to store the array Label, Parent, NParent, Flag, PFlage, Index, the set $D$ and $SD$. Three memory units are
required to store the values of the variables $i$ , $N$ and $m$. The space complexity of Algorithm 2 is therefore $8n + 3 =
O(n)$. \end{proof}

\end{document}